\documentclass{amsart}

\newtheorem{theorem}{Theorem}[section]
\newtheorem{lemma}[theorem]{Lemma}
\newtheorem{proposition}[theorem]{Proposition}

\newtheorem{claim}{Claim}
\theoremstyle{remark}
\newtheorem{remark}[theorem]{Remark}

\newcommand{\Z}{\mathbb{Z}}
\newcommand{\Q}{\mathbb{Q}}

\title[Purely cosmetic surgeries and Jones polynomials, II]{Purely cosmetic surgeries on knots with low-span Jones polynomials, II}

\author{Kazuhiro Ichihara}
\address{Department of Mathematics, College of Humanities and Sciences, Nihon University, 3-25-40 Sakurajosui, Setagaya-ku, Tokyo 156-8550, JAPAN}
\email{ichihara.kazuhiro@nihon-u.ac.jp}

\author{Kengo Kishimoto}
\address{Department of Mathematics, Osaka Institute of Technology, 5-16-1 Omiya, Asahi-ku, Osaka 535-8585, JAPAN}
\email{kengo.kishimoto@oit.ac.jp}

\subjclass[2020]{Primary 57K10, Secondary 57K14, 57K30}

\keywords{cosmetic surgery, Jones polynomial, braid index}

\thanks{This work was supported by JSPS KAKENHI Grant Number JP26K22260.}

\date{\today}

\begin{document}

\begin{abstract}
We show that a knot in the 3-sphere with a nontrivial Jones polynomial
of span at most $14$ admits no purely cosmetic surgery.
This extends a result of the first author for span at most $11$.
We determine all Laurent polynomials of span $15$ satisfying the
conditions on the Jones polynomial used in the proof.
These polynomials form an infinite family.
We also show that a knot of braid index at most $4$ whose Jones
polynomial has odd span admits no purely cosmetic surgery.
\end{abstract}

\maketitle

\section{Introduction}

The Jones polynomial $V_L(t)\in\Z[t^{\pm1/2}]$ of an oriented link
$L$ is an isotopy invariant characterized by the following
normalization and skein relation \cite{Jones87}:
\begin{align*}
V_O(t)&=1,\\
t^{-1}V_{L_+}(t)-tV_{L_-}(t)
&=(t^{1/2}-t^{-1/2})V_{L_0}(t),
\end{align*}
where $O$ denotes the unknot and $(L_+,L_-,L_0)$ is a usual skein triple.

The \emph{span} of the Jones polynomial of a link $L$ is defined by
\[
\operatorname{span} V_L(t)
=
\max\deg V_L(t)-\min\deg V_L(t).
\]
For a knot $K$, we have $V_K(t)\in\Z[t^{\pm1}]$.

In this paper, we prove the following extension of a result of the first
author \cite{ichihara2026purelycosmeticsurgeriesknots}.

\begin{theorem}\label{thm:main}
If a knot $K$ in the 3-sphere $S^3$ has a nontrivial Jones polynomial
$V_K(t)$ and $\operatorname{span} V_K(t) \le 14$, then $K$ does not
admit purely cosmetic surgery.
\end{theorem}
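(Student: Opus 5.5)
The plan is to follow the strategy of \cite{ichihara2026purelycosmeticsurgeriesknots} and reduce Theorem~\ref{thm:main} to a finite algebraic problem about Laurent polynomials. First we collect the known constraints on a knot $K$ admitting a purely cosmetic surgery. By Ni--Wu, the slopes are $\pm r$. By Hanselman, $r=\pm 2$ or $r=\pm 1/q$ and $g(K)=2$. The Casson--Walker/Casson--Gordon and Boyer--Lines invariants give $\Delta_K''(1)=0$, equivalently $a_2(K)=0$. Ichihara--Wu show that $V_K'''(1)=0$. Combined with the standard identities $V_K(1)=1$, $V_K'(1)=0$ and $V_K''(1)=-3\Delta_K''(1)$, this says that $V_K(t)-1$ vanishes to order $4$ at $t=1$. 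We also use the value at a root of unity, $V_K(\omega)=1$ for $\omega=e^{2\pi i/3}$. Finally, the Arf-invariant condition at $t=-1$ reads $V_K(-1)=\Delta_K(-1)\equiv \pm1 \pmod 8$, and it should simplify once $a_2=0$. So we reduce to the following: classify all $f\in\Z[t^{\pm1}]$ with $f\neq 1$, $\operatorname{span} f\le 14$, $(t-1)^4 \mid f-1$, $(t^2+t+1)\mid f-1$, and $f(-1)\equiv 1 \pmod 8$.

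The second step is to parametrize these polynomials. Write $f(t)=1+(t-1)^4(t^2+t+1)\,g(t)$ with $g\in\Z[t^{\pm1}]$. Then $\operatorname{span} f=6+\operatorname{span} g$ whenever the constant term does not cancel an extreme coefficient. The extra conditions, namely evaluation at $t=-1$ and the degree-bookkeeping conditions satisfied by Jones polynomials of knots, cut down the possibilities. The extra conditions come from the mod-$2$ and mod-$4$ behaviour of Jones polynomials, for instance $V_K(t)\equiv V_K(t^{-1})$-type congruences and Murakami's $V_K(i)=(-1)^{\operatorname{Arf}}$. We expect these to force the span to be of a particular parity or to force $g$ to have a specific shape. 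The first author's earlier span-$\le 11$ argument presumably showed that no such $f$ exists in that range. We would redo the same elimination for spans $12,13,14$, splitting into cases by span and by the position of the constant term $1$ relative to the support of $(t-1)^4(t^2+t+1)g$.

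The key step, and the main obstacle, is showing that spans $12$–$14$ admit no solution, given that span $15$ does admit an infinite family. Infinitely many candidates appearing at span $15$ means the obstructions are sharp, so the elimination at span $\le 14$ must use every condition. Our first approach is to exploit $V_K(i)=\pm1$ and $V_K(e^{\pi i/3})$ (the Lickorish--Millett value $\pm i^{\,c-1}\sqrt3^{\,d}$) alongside the divisibility. These give integrality and norm constraints on $g(i)$ and $g(e^{\pi i/3})$. Together with the bounded span of $g$ (at most $8$), they leave a finite, small integer search that can be finished by hand or by a short computer enumeration. If a case survives, we would fall back on the bound $|r|$-independent condition from Heegaard Floer thickness combined with span, namely $\operatorname{span} V_K \ge$ crossing-related bounds. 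The primary plan, however, is the polynomial elimination.
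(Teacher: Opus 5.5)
There is a genuine gap, in two places. \emph{First, your constraints are too weak.} For $f=1+(t-1)^4(t^2+t+1)g$ you have $f(-1)=1+16g(-1)$, so the condition $f(-1)\equiv1\pmod 8$ is automatic. As a result, $f(t)=1-(t-1)^4(t^2+t+1)=-t^6+3t^5-3t^4+2t^3-3t^2+3t$, of span $5$, satisfies everything in your reduced problem.

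Sharpening your tests to the exact values $f(-1)=f(i)=f(\zeta_6)=1$ does not help either. The polynomial $1-(t-1)^4\Phi_2(t)\Phi_3(t)\Phi_4(t)\Phi_6(t)$ has zero constant term and degree $11$, so its span is at most $10$, and it passes all of those tests.

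What the paper uses is divisibility of $V_K(t)-1$ by $Q(t)=(t-1)^5\Phi_2\Phi_3\Phi_4\Phi_5\Phi_6$, which has degree $16$. This requires:
\begin{itemize}
\item $V_K''''(1)=0$ and $V_K(\zeta_5)=1$, both from the earlier paper and both absent from your plan;
\item $V_K(i)=1$;
\item the exact value $V_K(-1)=1$, which comes from $\Delta_K(t)=1$ (Daemi--Lidman--Miller Eismeier), not from a mod-$8$ congruence;
\item the new ingredient $V_K(\zeta_6)=+1$.
\end{itemize}
For the last of these, Lickorish--Millett with $\det K=1$ gives only $V_K(\zeta_6)=\pm1$, and no norm or integrality argument detects the sign. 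The paper fixes the sign via the congruence $V_K(\zeta_6)\equiv V_K(-1)=1\pmod{1+\zeta_6}$ in $\Z[\zeta_6]$, together with the fact that $N(1+\zeta_6)=3$ does not divide $N(2)=4$.

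The degree-$16$ divisor is sharp. Removing one irreducible factor from $Q$ (one copy of $t-1$, or one $\Phi_m$) leaves a $Q'$ with $Q'(0)=\pm1$ and $\deg Q'\le15$. Then $1-Q'(0)Q'(t)$ has zero constant term and span at most $14$. So (A5) and each of (E2)--(E6) is genuinely needed.

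\emph{Second, even with the right divisor, the problem is not a finite search.} Suppose all exponents of $V_K$ are positive, with lowest degree $r$. Then the constant $1$ cancels the constant term of $Qh$. This is exactly the case where your formula $\operatorname{span}f=\deg Q+\operatorname{span}g$ fails. Here $\deg h=r+\operatorname{span}V_K-\deg Q$ grows without bound in $r$, so $\operatorname{span} g$ is not bounded by $8$. This unboundedness is precisely what produces the infinite span-$15$ family $G_r$.

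You therefore need an argument that is uniform in $r$. The paper expands $-1/Q(t)=(1+t)/((1-t)^2(1-t^4)(1-t^5)(1-t^6))=\sum_{n\ge0}a_nt^n$, where every $a_n>0$. From $h=\sum a_nt^n-V_K\sum a_nt^n$ it follows that $h$ has coefficient $a_n$ in each degree $n<r$. Hence $\deg h\ge r-1$, and so $\operatorname{span}V_K=16+\deg h-r\ge15$. The all-negative case reduces to this one via $t^{16}Q(t^{-1})=-Q(t)$. When $r\le0\le s$, one gets $\operatorname{span}V_K\ge\operatorname{span}(V_K-1)=16+\operatorname{span}h\ge16$.

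Your fallback of bounding $\operatorname{span}V_K$ from below by crossing-number data also points the wrong way. The Jones span is bounded \emph{above} by the crossing number, not below.
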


The case $\operatorname{span} V_K(t) \le 11$ was proved in
\cite[Theorem 1.1]{ichihara2026purelycosmeticsurgeriesknots}.

These results support the conjecture that no nontrivial knot in $S^3$
admits purely cosmetic surgery
(see \cite[Problem 1.12(c)]{K3List}).
Here a pair of Dehn surgeries on a nontrivial knot in $S^3$ along
distinct slopes is said to be \emph{purely cosmetic} if there is an
orientation-preserving homeomorphism between the resulting 3-manifolds.

We also determine all Laurent polynomials of span $15$ satisfying the
conditions on the Jones polynomial used in the proof of
Theorem~\ref{thm:main}.
These polynomials form an infinite family
(see Section~\ref{sec:span15}).
Thus these conditions alone do not rule out span $15$.

Combining Theorem~\ref{thm:main} with known results on knots of small
braid index, we obtain the following proposition.
Here $br(K)$ denotes the braid index of a knot $K$ in $S^3$.

\begin{proposition}\label{prop:braid-index}
Let $K$ be a knot in $S^3$ with Jones polynomial $V_K(t)$.
If $br(K) \le 4$ and $\operatorname{span} V_K(t)$ is odd, then $K$
admits no purely cosmetic surgery.
\end{proposition}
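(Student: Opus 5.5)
Since $\operatorname{span} V_K(t)$ is odd, it is at least $1$, so $V_K(t)$ is nontrivial. By Theorem~\ref{thm:main} it therefore suffices to consider $\operatorname{span} V_K(t)\ge 15$. The plan is to bound the span of the Jones polynomial above in terms of the braid index in the cases $br(K)\le 4$, or to appeal directly to known cosmetic-surgery results for knots of small braid index, and so arrange that only spans at most $14$ survive.

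We split by braid index. If $br(K)=1$, then $K$ is the unknot and $V_K=1$ has span $0$, which is even, so this case is vacuous. If $br(K)=2$, then $K$ is a $(2,2k+1)$-torus knot. Torus knots admit no purely cosmetic surgeries (Rong), so this case is settled by citation. If $br(K)=3$, we would invoke the known result that knots of braid index $3$ admit no purely cosmetic surgery. I believe this follows from Varvarezos's work on $3$-braid knots, which uses the Casson--Walker and Jones-polynomial constraints of the first author and Wu. If that citation is unavailable, I would instead use a span bound for $3$-braids. The remaining case, $br(K)=4$, is the main one.

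For $br(K)=4$, the key input is a known obstruction on the Jones polynomial coming from purely cosmetic surgery. By the Ichihara--Wu condition, $V_K''(1)=0$ together with $V_K'''(1)=0$, or a similar condition, constrains the coefficients. I would combine this with a structural fact for closed $4$-braids: parity or sign constraints on the coefficients. Alternatively, the Morton--Franks--Williams inequality relates the $v$-span of the HOMFLY polynomial to the braid index. The cleanest route may be this: for knots of braid index at most $4$, with odd span, the literature (e.g.\ Ito's results on $4$-braids, or ``Jones polynomials of low braid index'') shows that either the span is small or the conditions from Theorem~\ref{thm:main}'s proof are violated. Concretely, I would try to show that the family of span-$15$ (and larger) polynomials satisfying the conditions, as determined in Section~\ref{sec:span15}, cannot arise from $4$-braids, for instance because of the value $V_K(e^{2\pi i/3})=1$ or $V_K(i)=\pm1$ combined with the braid-index constraints.

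The main obstacle is the $br(K)=4$ case with large odd span, since Theorem~\ref{thm:main} only covers spans up to $14$. What I would try first is to locate an existing theorem stating that knots of braid index at most $4$ with odd-span Jones polynomial are excluded by the Jones-polynomial criterion; the odd-span hypothesis strongly suggests such a result is available. I would then verify that its hypotheses match.
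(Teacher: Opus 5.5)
\textbf{There is a genuine gap: the case $br(K)=4$ is not proved.} Your reductions for $br(K)\le 3$ are fine and match the paper: torus knots for $br(K)\le 2$, and Varvarezos for closed $3$-braids. But for $br(K)=4$, which is the whole content of the proposition, you only list candidate tools and hope a ready-made theorem exists.

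The tools you list cannot work on their own:
\begin{itemize}
\item The conditions (A1)--(A5) and (E2)--(E6) are satisfied by the infinite family $G_r(t)$, $G_r(t^{-1})$ of span $15$. So no argument using only these conditions can exclude span $15$. This includes $V_K''(1)=V_K'''(1)=0$, and also $V_K(\zeta_3)=1$, which holds for every knot.
\item There is no bound on $\operatorname{span} V_K(t)$ in terms of braid index alone; already the $(2,p)$-torus knots have unbounded span. So braid-index information such as Morton--Franks--Williams needs an additional input.
\end{itemize}

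\textbf{The missing idea is to use the genus.} By Daemi--Lidman--Miller Eismeier, a nontrivial knot with a purely cosmetic surgery has $g(K)=2$ and $\Delta_K(t)=1$. Write $K=\widehat{\beta}$ with $\beta$ a $4$-braid of exponent sum $e$. Bennequin's inequality $\operatorname{sl}(\widehat{\beta})=e-4\le 2g(K)-1=3$, applied to $\beta$ and to its mirror braid $\beta^*$, gives $|e|\le 7$. Jones's identity for closed $4$-braids is
\[
t^{-e}V_K(t)+t^{e}V_K(t^{-1})=\left(t^{-3/2}+t^{-1/2}+t^{1/2}+t^{3/2}\right)\left(t^{-e/2}+t^{e/2}\right)-\left(t^{-2}+t^{-1}+2+t+t^{2}\right)\Delta_K(t).
\]
With $\Delta_K=1$ and $|e|\le 7$, the right-hand side is supported in degrees $[-5,5]$.

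On the left, the two summands have top degrees $s-e$ and $e-r$, whose sum is $N=\operatorname{span} V_K(t)$. This is exactly where the odd-span hypothesis enters: it is not a hint that a literature result exists, but what prevents cancellation. Since $N$ is odd, the two top degrees differ. The larger one is at least $N/2\ge 15/2$, hence at least $8$, using $N\ge 15$ from Theorem~\ref{thm:main}. That top term therefore survives on the left, contradicting the degree bound on the right. Without this argument, or an equivalent one, your proposal does not establish the $br(K)=4$ case.
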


In particular, if a counterexample to the conjecture above has a Jones
polynomial of span $15$, then its braid index must be at least $5$.

\section{Proof of Theorem~\ref{thm:main}}

For each integer $m\ge2$, let $\zeta_m=e^{2\pi i/m}$.
We normalize the Alexander polynomial so that
$\Delta_K(t)=\Delta_K(t^{-1})$ and $\Delta_K(1)=1$.
We first establish the following lemma, which will be used to obtain the
condition at a primitive sixth root of unity.

\begin{lemma}\label{lem:6throot}
Let $K$ be a knot in $S^3$ with Jones polynomial $V_K(t)$.
If $V_K(-1)=1$, then $V_K(\zeta_6)=1$.
\end{lemma}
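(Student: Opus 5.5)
The plan is to combine two classical evaluations of the Jones polynomial with the standard divisibility property of $V_K(t)-1$, and then settle a sign ambiguity with a congruence in $\Z[\zeta_6]$.

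\textbf{Step 1: $V_K(\zeta_6)=\pm1$.} First, recall that $V_K(-1)=\Delta_K(-1)$ under the Conway normalization. Its absolute value is the determinant of $K$, namely $|H_1(\Sigma_2(K);\Z)|$, where $\Sigma_2(K)$ is the double branched cover. So the hypothesis $V_K(-1)=1$ gives $|H_1(\Sigma_2(K);\Z)|=1$. In particular $H_1(\Sigma_2(K);\Z/3)=0$.

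Next, invoke the Lickorish--Millett evaluation. For a knot it reads
\[
V_K(e^{i\pi/3})=\pm (i\sqrt3)^{d}, \qquad d=\dim_{\Z/3}H_1(\Sigma_2(K);\Z/3).
\]
Here $d=0$, so $V_K(\zeta_6)=\pm1$. The whole difficulty is then to exclude the sign $-1$. The Lickorish--Millett formula does not by itself control this sign, and that is the main obstacle.

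\textbf{Step 2: use the divisibility of $V_K(t)-1$.} To fix the sign, use the classical fact (Jones) that $V_K(t)-1$ is divisible by $(1-t)(1-t^3)$ in $\Z[t^{\pm1}]$. Write
\[
V_K(t)=1+(1-t)(1-t^3)W(t), \qquad W(t)\in\Z[t^{\pm1}].
\]
Evaluating at $t=-1$ gives $1=V_K(-1)=1+4W(-1)$, hence $W(-1)=0$. Evaluating at $t=\zeta_6$, where $\zeta_6^3=-1$, gives
\[
V_K(\zeta_6)=1+2(1-\zeta_6)W(\zeta_6).
\]

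\textbf{Step 3: rule out the sign $-1$ by reduction mod $1+\zeta_6$.} Suppose $V_K(\zeta_6)=-1$. Then $(1-\zeta_6)W(\zeta_6)=-1$. Since $1-\zeta_6=\zeta_6^{-1}$ is a unit, this forces $W(\zeta_6)=-\zeta_6$.

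Now reduce modulo the prime ideal $(1+\zeta_6)$ of $\Z[\zeta_6]$. This ideal has norm $|1+\zeta_6|^2=3$, so $\Z[\zeta_6]/(1+\zeta_6)\cong\mathbb{F}_3$, and in this quotient $\zeta_6\equiv-1$. Because $W$ has integer coefficients, we get
\[
W(\zeta_6)\equiv W(-1)=0 \pmod{1+\zeta_6}.
\]
On the other hand, $-\zeta_6\equiv 1\not\equiv0$. This is a contradiction, so the sign is $+1$ and $V_K(\zeta_6)=1$.

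This congruence argument is the one step needing care. The main thing to verify is the divisibility by $(1-t)(1-t^3)$, which is standard: it follows from $V_K(1)=1$, $V_K'(1)=0$, and $V_K(\zeta_3)=1$.
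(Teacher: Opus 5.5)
Your proof is correct and follows the same route as the paper. Lickorish--Millett together with $\det(K)=|V_K(-1)|=1$ gives $V_K(\zeta_6)=\pm1$, and the sign is then fixed by reducing modulo $1+\zeta_6$, where $\zeta_6\equiv-1$.

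The only difference is your detour through Jones's divisibility $(1-t)(1-t^3)\mid V_K(t)-1$, which imports the extra facts $V_K'(1)=0$ and $V_K(\zeta_3)=1$ and is unnecessary. Applying the congruence directly to $V_K$ gives $V_K(\zeta_6)\equiv V_K(-1)=1 \pmod{1+\zeta_6}$, so $V_K(\zeta_6)=-1$ would force $1+\zeta_6\mid 2$. That is impossible because $N(1+\zeta_6)=3$ does not divide $N(2)=4$, which is exactly how the paper concludes.
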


We introduce some notation for the proof of Lemma~\ref{lem:6throot}.

Since $\zeta_6=e^{\pi i/3}$, we have
\[
\zeta_6^6=1,\qquad
\overline{\zeta_6}=\zeta_6^{-1}=\zeta_6^5,\qquad
\zeta_6+\overline{\zeta_6}=2\cos(\pi/3)=1.
\]
Let $\Z[\zeta_6]$ be the ring of complex numbers that can be written as
polynomials in $\zeta_6$ with integer coefficients.
Since $\zeta_6^{-1}=\zeta_6^5$, negative powers of $\zeta_6$ also lie
in $\Z[\zeta_6]$.

For $a,b,c\in\Z[\zeta_6]$ with $c\ne0$, we write
$a\equiv b\pmod{c}$ if $a-b=cx$ for some $x\in\Z[\zeta_6]$.
This relation is compatible with addition and multiplication.

Since $\zeta_6^2=\zeta_6-1$, every element of $\Z[\zeta_6]$ can be written
as $x+y\zeta_6$ with $x,y\in\Z$.
For $a\in\Z[\zeta_6]$, define its \emph{norm} by
\[
N(a)=a\bar a=|a|^2.
\]
If $a=x+y\zeta_6$, then $N(a)=x^2+xy+y^2$, so $N(a)$ is a non-negative
integer.
The norm is multiplicative, that is, $N(ab)=N(a)N(b)$.
Hence, if $a$ divides $b$ in $\Z[\zeta_6]$, then $N(a)$ divides $N(b)$
in $\Z$.

\begin{proof}[Proof of Lemma~\ref{lem:6throot}]
By Lickorish--Millett \cite[Theorem 3]{LM86}, we have
\[
V_K(\zeta_6)=\pm(i\sqrt3)^d,
\]
where $d=\dim_{\mathbb{F}_3} H_1( \Sigma_2(K) ; \mathbb{F}_3 )$ and
$\Sigma_2(K)$ is the double branched cover of $S^3$ along $K$.
On the other hand, $H_1(\Sigma_2(K);\Z)$ is a finite group of order $\vert\Delta_K(-1)\vert=\det(K)$ (see \cite{Lickorish}).
Since $V_K(-1)=\Delta_K(-1)=1$ (see \cite{Jones}), this order is $1$, and so $H_1(\Sigma_2(K);\Z)=0$.
Hence $d=0$, and $V_K(\zeta_6)=\pm1$.

To determine the sign of $V_K(\zeta_6)$, we prepare the following claim.

\begin{claim}\label{clm:congruence}
For any Laurent polynomial $f(t)\in\Z[t^{\pm1}]$, we have
\[
f(\zeta_6)\equiv f(-1) \pmod{1+\zeta_6}.
\]
\end{claim}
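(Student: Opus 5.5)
The plan is to reduce to monomials and then use that $\zeta_6 \equiv -1 \pmod{1+\zeta_6}$. Since the relation $\equiv \pmod{1+\zeta_6}$ is compatible with addition and multiplication, and $f(\zeta_6)-f(-1)$ is $\Z$-linear in the coefficients of $f$, it suffices to treat a single monomial $f(t)=t^n$ with $n\in\Z$. Thus we want
\[
\zeta_6^{\,n}\equiv(-1)^n \pmod{1+\zeta_6}
\]
for every integer $n$.

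The base congruence is immediate: $\zeta_6-(-1)=1+\zeta_6$, so $\zeta_6\equiv-1\pmod{1+\zeta_6}$. For $n\ge0$, multiplicativity of the congruence then gives $\zeta_6^{\,n}\equiv(-1)^n$ by induction on $n$.

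For $n<0$ there are two routes. One is to write $\zeta_6^{\,n}=\zeta_6^{\,n+6k}$ with $n+6k\ge0$, which gives $\zeta_6^{\,n}\equiv(-1)^{n+6k}=(-1)^n$. The other is to note that $\zeta_6^{-1}=\zeta_6^5\equiv(-1)^5=-1=(-1)^{-1}$ and apply multiplicativity again. Either way we are working in $\Z[\zeta_6]$, where negative powers of $\zeta_6$ have already been observed to lie.

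Finally, summing $a_n\zeta_6^{\,n}\equiv a_n(-1)^n$ over the finitely many nonzero coefficients $a_n$ of $f$ gives $f(\zeta_6)\equiv f(-1)\pmod{1+\zeta_6}$.

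No step here is a real obstacle; the only point needing care is the handling of negative exponents, which the periodicity $\zeta_6^6=1$ resolves. For context, this claim will presumably be used next with $f=V_K$: together with $V_K(-1)=1$ it gives $V_K(\zeta_6)\equiv1$. If $V_K(\zeta_6)$ were $-1$, then $1+\zeta_6$ would divide $2$, so $N(1+\zeta_6)=3$ would divide $N(2)=4$, which is impossible. This fixes the sign as $+1$.
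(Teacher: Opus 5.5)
Your proof is correct and follows essentially the same route as the paper: the base congruence $\zeta_6\equiv-1\pmod{1+\zeta_6}$, multiplicativity for nonnegative exponents, periodicity $\zeta_6^6=1$ for negative exponents, and then summation over the coefficients of $f$. Your alternative for negative exponents via $\zeta_6^{-1}=\zeta_6^5\equiv-1$ is also valid.
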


\begin{proof}
Since $\zeta_6-(-1)=1+\zeta_6$, we have $\zeta_6\equiv -1 \pmod{1+\zeta_6}$.
First, let $k\ge 0$.
Multiplying the congruence $\zeta_6\equiv-1$ by itself $k$ times, we have $\zeta_6^k\equiv(-1)^k$.
Next, let $k<0$.
Take an integer $j$ with $6j+k\ge0$.
Since $\zeta_6^6=1$ and $6j$ is even, we have
\[
\zeta_6^k=\zeta_6^{6j+k}\equiv(-1)^{6j+k}=(-1)^k .
\]
Thus $\zeta_6^k\equiv(-1)^k \pmod{1+\zeta_6}$ for all $k\in\Z$.
Now write $f(t)=\sum_k a_k t^k$ with $a_k\in\Z$, where the sum is finite.
Then
\[
f(\zeta_6)=\sum_k a_k\zeta_6^k\equiv\sum_k a_k(-1)^k=f(-1) \pmod{1+\zeta_6}. \qedhere
\]
\end{proof}

By Claim~\ref{clm:congruence} and the assumption $V_K(-1)=1$, we have
\[
V_K(\zeta_6)\equiv V_K(-1)=1 \pmod{1+\zeta_6}.
\]

If $V_K(\zeta_6)=-1$, then $-1\equiv1 \pmod{1+\zeta_6}$, which contradicts Claim~\ref{clm:norm} below.

\begin{claim}\label{clm:norm}
The element $1+\zeta_6$ does not divide $2$ in $\Z[\zeta_6]$.
In other words, $-1\not\equiv 1 \pmod{1+\zeta_6}$.
\end{claim}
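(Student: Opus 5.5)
The plan is to use the multiplicative norm $N$ introduced just before the proof of Lemma~\ref{lem:6throot}. If $1+\zeta_6$ divided $2$ in $\Z[\zeta_6]$, then $N(1+\zeta_6)$ would divide $N(2)$ in $\Z$. So it suffices to compute these two norms and check that the first does not divide the second.

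First compute $N(1+\zeta_6)$. Write $1+\zeta_6=x+y\zeta_6$ with $x=y=1$. The formula $N(x+y\zeta_6)=x^2+xy+y^2$ then gives $N(1+\zeta_6)=3$. As a cross-check, $|1+e^{\pi i/3}|^2=(3/2)^2+(\sqrt3/2)^2=3$.

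Next, $2=2+0\cdot\zeta_6$, so $N(2)=4$. Since $3\nmid 4$, the element $1+\zeta_6$ cannot divide $2$.

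For the reformulation, note that $-1\equiv 1 \pmod{1+\zeta_6}$ means $1-(-1)=2$ is a multiple of $1+\zeta_6$ in $\Z[\zeta_6]$ (the sign is immaterial). This is exactly what we have just excluded.

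There is no real obstacle here. The only point to check is that the norm facts stated before the lemma apply, namely multiplicativity and integrality for elements written as $x+y\zeta_6$. The preliminaries already grant both, so the claim follows immediately.
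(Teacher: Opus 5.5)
Your proposal is correct and matches the paper's argument: multiplicativity of the norm reduces the claim to $N(1+\zeta_6)=3\nmid 4=N(2)$. The only difference is cosmetic. The paper evaluates $N(1+\zeta_6)$ as $(1+\zeta_6)(1+\overline{\zeta_6})=2+(\zeta_6+\overline{\zeta_6})=3$, whereas you use the formula $x^2+xy+y^2$.
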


\begin{proof}
We have
\[
N(1+\zeta_6)=(1+\zeta_6)(1+\overline{\zeta_6})=2+(\zeta_6+\overline{\zeta_6})=3, \qquad N(2)=4 .
\]
If $1+\zeta_6$ divided $2$, then $N(1+\zeta_6)=3$ would divide $N(2)=4$, which is a contradiction.
\end{proof}

Therefore $V_K(\zeta_6)=1$.
\end{proof}

\begin{remark}
The condition $d=0$ alone does not determine the sign of $V_K(\zeta_6)$.
For example, the figure-eight knot $4_1$ with $\det(4_1)=5$ and the knot $5_2$ with $\det(5_2)=7$ both have $d=0$,
but they satisfy $V_{4_1}(\zeta_6)=V_{5_2}(\zeta_6)=-1$.
These values are consistent with Claim~\ref{clm:congruence}:
we have $V_{4_1}(\zeta_6)-V_{4_1}(-1)=-1-5=-6$ and $V_{5_2}(\zeta_6)-V_{5_2}(-1)=-1-(-7)=6$.
Both are divisible by $3$, and hence by $1+\zeta_6$, since $3=(1+\zeta_6)(1+\overline{\zeta_6})$ and $1+\overline{\zeta_6}\in\Z[\zeta_6]$.
\end{remark}

We now collect the necessary conditions on the Jones polynomial that will
be used in the proof of Theorem~\ref{thm:main}.

\begin{proposition}\label{prop:conditions}
Suppose that a knot $K \subset S^3$ admits purely cosmetic surgery.
Then the Jones polynomial $V_K(t)$ of $K$ must satisfy the following conditions:
\begin{align*}
  \text{(A1)} \quad & V_K(1)      = 1, \\
  \text{(A2)} \quad & V_K'(1)     = 0, \\
  \text{(A3)} \quad & V_K''(1)    = 0, \\
  \text{(A4)} \quad & V_K'''(1)   = 0, \\
  \text{(A5)} \quad & V_K''''(1)  = 0,
\end{align*}
and also the following conditions at roots of unity:
\begin{align*}
  \text{(E2)} \quad & V_K(\zeta_2)=1, \\
  \text{(E3)} \quad & V_K(\zeta_3)=1, \\
  \text{(E4)} \quad & V_K(\zeta_4)=1, \\
  \text{(E5)} \quad & V_K(\zeta_5)=1, \\
  \text{(E6)} \quad & V_K(\zeta_6)=1.
\end{align*}
\end{proposition}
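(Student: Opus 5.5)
The plan is to combine the known obstructions to purely cosmetic surgery with standard evaluations of the Jones polynomial at special values, and then add Lemma~\ref{lem:6throot} for (E6). The input from cosmetic surgery theory is the result of Hanselman (building on Ni--Wu and Ozsv\'ath--Szab\'o): if $K$ admits purely cosmetic surgery, then $\Delta_K(t)=1$ up to the relevant order, more precisely $\Delta_K''(1)=0$ and the genus-type constraints hold. We also use the result of Ichihara--Wu that $V_K'''(1)=0$, and Ichihara--Ito--Saito's extension showing the finite type invariant of order $4$ vanishes, which gives $V_K''''(1)=0$ (or the combination involving $v_4$). Condition (A1) holds for every knot. (A2) holds since $V_K'(1)=0$ for any knot. (A3) follows because $V_K''(1)=-3\Delta_K''(1)$ and $\Delta_K''(1)=0$ by Boyer--Lines/Ni--Wu. (A4) and (A5) are then the cited results of Ichihara--Wu and Ichihara--Ito--Saito, converting their statements on $v_3$ and $v_4$ into derivatives of $V_K$ at $t=1$.

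For the root-of-unity conditions I will use classical evaluations. (E2) is $V_K(-1)=\Delta_K(-1)$. Hanselman's theorem gives that a knot with purely cosmetic surgeries has surgeries $\pm 2$ or $\pm1/q$; in either case the homology or the Casson--Walker/$d$-invariant constraints force $\det(K)=1$. I would most likely cite the known fact, proved by Ichihara--Wu or Hanselman, that $\Delta_K(-1)=\pm1$ with sign fixed. Alternatively, $\Delta_K(t)$ satisfies $\Delta_K(-1)\equiv 1\pmod 8$ up to sign, and vanishing of the Heegaard Floer obstruction forces $\det K=1$. The evaluation $V_K(\zeta_3)=1$ holds for every knot. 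For (E4), $V_K(i)=(-1)^{\mathrm{Arf}(K)}$, and the Arf invariant is determined by $\Delta_K(-1)\bmod 8$, so $\det K=1$ gives Arf $0$. For (E5), I would use Jones's evaluation at $\zeta_5$ in terms of the golden-ratio/Kauffman-type invariants, or more simply the congruence trick of Claim~\ref{clm:congruence} adapted to $\Z[\zeta_5]$ together with conditions (A1)--(A5). Indeed, $(t-1)^5$ divides $V_K(t)-1$ after (A1)--(A5), and $(\zeta_5-1)^4$ is an associate of $5$. Finally, (E6) follows directly from Lemma~\ref{lem:6throot} once (E2) is established.

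The main obstacle is (E5). Conditions (A1)--(A5) only give that $V_K(t)-1$ is divisible by $(t-1)^5$ in $\Z[t^{\pm1}]$. This yields $V_K(\zeta_5)\equiv1$ modulo $(1-\zeta_5)^5$, not equality. To get equality I would first try Jones's formula relating $V_K(\zeta_5)$ to a $3$-manifold invariant of the double branched cover or to the Kauffman polynomial. Failing that, I would deduce it from surgery-invariance of the $SO(3)$ Witten--Reshetikhin--Turaev invariant at level $5$, comparing $\tau_5$ of the two homeomorphic surgeries, since the surgery formula expresses $\tau_5(S^3_r(K))$ through colored Jones values at $\zeta_5$.
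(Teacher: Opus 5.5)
\textbf{Gap 1: (E2), and therefore (E6).} There is a genuine gap at (E2). It carries over to (E6), which you derive from (E2) via Lemma~\ref{lem:6throot}.

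\begin{itemize}
\item The paper uses the theorem of Daemi--Lidman--Miller Eismeier \cite[Corollary 1.4]{DLME}: a knot with purely cosmetic surgery has $\Delta_K(t)=1$. This gives $V_K(-1)=\Delta_K(-1)=1$ immediately.
\item You only have $\Delta_K''(1)=0$ and Hanselman's restrictions, and none of the routes you list yields $\det K=1$.
\begin{itemize}
\item Homology: $S^3_{\pm2}(K)$ and $S^3_{\pm1/q}(K)$ have $H_1\cong\Z/2$ or $0$ for every $K$, so it carries no information about $\det K$.
\item Casson--Walker: $\Delta_K''(1)=0$ says $a_2(K)=0$. This forces only $\Delta_K(-1)\equiv1\pmod 8$ (equivalently $\mathrm{Arf}(K)=0$), not $|\Delta_K(-1)|=1$.
\item Heegaard Floer: the constraints of Ni--Wu and Hanselman restrict the slopes, the genus and $\tau(K)$, not the determinant.
\end{itemize}
\item The sign is not the issue. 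For every knot $\Delta_K(-1)\equiv1\pmod 4$, so $|\Delta_K(-1)|=1$ would already give $\Delta_K(-1)=1$; what is missing is the absolute value.
\item Your (E4) argument does not need $\det K=1$. Since $\mathrm{Arf}(K)\equiv a_2(K)\pmod 2$, the condition $\Delta_K''(1)=0$ already gives $V_K(i)=(-1)^{\mathrm{Arf}(K)}=1$.
\end{itemize}

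\textbf{Gap 2: (E5).} Your proposal leaves (E5) open. As you note, (A1)--(A5) give only $V_K(\zeta_5)\equiv1$ modulo $(1-\zeta_5)^5$. Your two fallbacks are stated as plans, not carried out: a Jones-type evaluation at $\zeta_5$, or comparing the $SO(3)$ invariants $\tau_5$ of the two surgered manifolds. The paper does not reprove (E5) here either. It imports (E5), together with (A1)--(A5), (E3) and (E4), from Sections 2 and 3 of the first author's earlier paper \cite{ichihara2026purelycosmeticsurgeriesknots}. You must either cite a precise result of this kind or actually execute the $\tau_5$ comparison.

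\textbf{The reference for (A5).} Your source for (A5), a vanishing degree-$4$ finite type invariant, is only loosely identified. Given (A1)--(A4), condition (A5) is equivalent to the vanishing of the $h^4$-coefficient of $V_K(e^h)$. You would need a reference establishing exactly that.
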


\begin{proof}
The conditions (A1)--(A5) and (E3)--(E5) follow from
\cite[Sections 2 and 3]{ichihara2026purelycosmeticsurgeriesknots}.
By \cite[Corollary 1.4]{DLME}, we have $\Delta_K(t)=1$, and hence
$V_K(-1)=\Delta_K(-1)=1$.
Thus (E2) holds, and Lemma~\ref{lem:6throot} gives (E6).
Here, since $V_K(t) \in \Z[t^{\pm1}]$, each of the conditions (E3)--(E6) does not depend on the choice of the primitive root of unity:
if $V_K(\xi)=1$ for one primitive $m$-th root of unity $\xi$, then $V_K(\xi')=1$ for every primitive $m$-th root of unity $\xi'$, since $\xi'$ is a Galois conjugate of $\xi$.
\end{proof}

\begin{remark}
The conditions (A1), (A2) and (E3) hold for every knot in $S^3$ (see \cite{Jones, Lickorish}).
\end{remark}

Now suppose that a knot $K \subset S^3$ admits purely cosmetic surgery
and that $V_K(t)\ne1$.
We will show that $\operatorname{span} V_K(t) \ge 15$. 

By (A1)--(A5), we have 
\[
(t - 1)^5 \mid (V_K (t) - 1).    
\]
Indeed, writing $V_K(t)-1 = t^{-a} g(t)$ with $g(t) \in \Z[t]$, the factor $t^{-a}$ does not vanish near $t=1$, and so the order of the zero of $V_K(t)-1$ at $t=1$ coincides with that of $g(t)$.

Also, since $V_K(t)$ is a Laurent polynomial with integer coefficients, the condition $V_K(z)=1$ at a primitive $m$-th root of unity $z$ implies 
\[
  \Phi_m (t) \mid (V_K (t) - 1), 
\]
where $\Phi_m (t)$ denotes the minimal polynomial of $z$ over $\Q$. 
Note that, since $\Phi_m(t)$ is monic, this divisibility holds in $\Z[t^{\pm 1}]$, not only in $\Q[t^{\pm 1}]$. 
Thus, by the conditions (E2)--(E6), we have
\[
  \Phi_m (t) \mid (V_K (t) - 1) \quad \text{ for $m = 2,3,4,5,6$.} 
\]
Since no two of the polynomials
$(t-1)^5,\Phi_2(t),\ldots,\Phi_6(t)$ have a common root, and
all of them are monic, we obtain
\[
Q(t):=(t-1)^5\Phi_2(t)\Phi_3(t)\Phi_4(t)\Phi_5(t)\Phi_6(t)
\mid\bigl(V_K(t)-1\bigr)
\]
in $\Z[t^{\pm1}]$.

Then $V_K(t)$ can be written as 
\[
V_K (t) = 1 + Q(t) h(t)
\]
with some Laurent polynomial $h(t) \in \Z[t^{\pm 1}]$. 
Note that the cyclotomic polynomials are written as
\begin{align*}
\Phi_2 (t) = 1 + t, \quad \Phi_3 (t) = 1 + t + t^2, \quad \Phi_4 (t) = 1 + t^2, \\
\Phi_5 (t) = 1 + t + t^2 + t^3 + t^4, \quad \Phi_6 (t) = 1 - t + t^2, 
\end{align*}
and the polynomial $Q(t)$ is calculated as
\[
\begin{aligned}
Q(t)={}&t^{16}-3t^{15}+4t^{14}-4t^{13}+3t^{12}-2t^{11}+2t^{10}-t^9\\
       &+t^7-2t^6+2t^5-3t^4+4t^3-4t^2+3t-1.
\end{aligned}
\]
Thus, $Q(t)$ has the constant term $-1$ and the highest term $t^{16}$, and so, 
\[
\operatorname{span} Q(t) = \deg Q(t) = 16
\]
holds.

To obtain $\operatorname{span} V_K(t) = \operatorname{span} (1 + Q(t) h(t)) \ge 15$, 
it suffices to show that the polynomial $h(t)$ has the non-zero terms of sufficiently high degrees. 
To verify that, we consider the power series expansion of $-1/Q(t)$. 

By direct calculation, we see that 
\[
- \frac{1}{Q(t)} = \frac{1+t}{(1-t)^2 (1-t^4) (1-t^5) (1-t^6) }. 
\]
Expand the right-hand side as a power series 
\[
- \frac{1}{Q(t)} = \sum_{ n \ge 0} a_n t^n 
\]
with integer coefficients $a_n$. 
Then $a_n > 0$ holds for any $n \ge 0$. 
Indeed, we have 
\begin{align*}
\frac{1}{(1 - t)^2} &= 1 + 2 t + 3 t^2 + 4 t^3 + \cdots \\
\frac{1}{1 - t^4} &= 1 + t^4 + t^8 + t^{12} + \cdots \\
\frac{1}{1 - t^5} &= 1 + t^5 + t^{10} + t^{15} + \cdots \\    
\frac{1}{1 - t^6} &= 1 + t^6 + t^{12} + t^{18} + \cdots ,
\end{align*}
and the numerator $1+t$ also has non-negative coefficients. 
Since all of these have non-negative coefficients and the coefficient of $t^n$ in $1/(1-t)^2$ is $n+1$, we have $a_n \ge n+1 > 0$. 
For example, direct calculations show that 
\[
-\frac{1}{Q(t)}
=
1+3t+5t^2+7t^3+10t^4+15t^5
+22t^6+30t^7+39t^8+50t^9+\cdots. 
\]

Consider the case that the degrees of the non-zero terms of $V_K(t)$ are all positive. 
Let $r \ge 1$ (resp. $s$) be the lowest (resp. highest) degree of a non-zero term of $V_K(t)$. 
Set $N := s - r = \operatorname{span} V_K(t)$. 

We first see that $h(t)$ is a polynomial, that is, $h(t) \in \Z[t]$. 
Suppose for a contradiction that $h(t)$ has a non-zero term of negative degree, and let $c\,t^{-k}$ with $k > 0$ and $c \ne 0$ be the non-zero term of $h(t)$ of the lowest degree. 
Since $Q(0) = -1$, the lowest degree term of $Q(t) h(t)$ is $-c\,t^{-k}$, and so $V_K(t) = 1 + Q(t) h(t)$ has the non-zero term $-c\,t^{-k}$ of negative degree. 
This contradicts $r \ge 1$. 

Multiplying $V_K(t)=1+Q(t)h(t)$ by
$-1/Q(t)=\sum_{n\ge0}a_nt^n$ and rearranging, we obtain
\[
h(t)=\sum_{n\ge0}a_nt^n
      -V_K(t)\sum_{n\ge0}a_nt^n
\]
as an identity of formal power series.
Since $V_K(t)$ has no terms of degree less than $r$ and the
series $\sum_{n\ge0}a_nt^n$ has only non-negative powers of $t$,
the product on the right has no terms of degree less than $r$.
Thus the coefficient of $t^n$ in $h(t)$ is $a_n$ for $0\le n<r$.
In particular, the coefficient of $t^{r-1}$ in $h(t)$ is
$a_{r-1}>0$, and hence
\[
\deg h(t)\ge r-1.
\]
On the other hand, since the highest degree of $V_K(t)$ is $s=r+N$ and $\deg Q(t) = 16$, by comparing the highest degree of $ V_K(t) = 1 + Q(t) h(t)$,  
\[
r + N = 16 + \deg h 
\]
holds, where we note that $\deg (Q(t) h(t)) = 16 + \deg h \ge 16 + r - 1 > 0$, so the constant term $1$ does not affect the highest degree term. 
Consequently, we obtain 
\[
\operatorname{span} V_K(t) = N = 16 + \deg h - r \ge 15. 
\]

The case that the degrees of the non-zero terms of $V_K(t)$ are all negative can be reduced to the above case by transformation $t \mapsto t^{-1}$. 
In fact, 
\[
t^{16} Q( t^{-1} ) = -Q(t)
\]
holds, and so, if $V_K(t) - 1 = Q(t) h(t)$, then 
\[
V_K(t^{-1}) - 1 = Q(t^{-1}) h(t^{-1}) = Q(t) \cdot \left( -t^{-16} h(t^{-1}) \right), 
\]
that is, $Q(t) \mid ( V_K(t^{-1}) -1 )$. 
Since the argument above uses only the facts that $V_K(t) - 1 = Q(t) h(t)$ for some $h(t) \in \Z[t^{\pm 1}]$ and that the degrees of the non-zero terms of $V_K(t)$ are all positive, it is applicable to $V_K(t^{-1})$. 
Thus, together with $\operatorname{span} V_K(t) = \operatorname{span} V_K(t^{-1}) $, we obtain $\operatorname{span} V_K(t) \ge 15$ in this case.

Finally we consider the case that $r \le 0 \le s$, where $r$ (resp. $s$) is the lowest (resp. highest) degree of a non-zero term of $V_K(t)$. 
Since $V_K(t) \ne 1$, we have $V_K(t)-1 = Q(t) h(t) \ne 0$. 
Then, since the lowest and the highest degrees of Laurent polynomials are additive under the multiplication of Laurent polynomials, we see that
\begin{align*}
\operatorname{span}(V_K(t) - 1) &= \operatorname{span}(Q(t)h(t)) \\
&= \operatorname{span} Q(t) + \operatorname{span} h(t) = 16 + \operatorname{span} h(t) \ge 16.   
\end{align*}
On the other hand, since $r \le 0 \le s$, we have 
\[
\operatorname{span}(V_K (t) - 1) \le \operatorname{span} V_K(t). 
\]
It follows that 
\[\operatorname{span} V_K(t) \ge \operatorname{span}(V_K(t) -1) \ge 16 .\] 

Consequently, in all the cases, if a knot $K$ in $S^3$ admits purely cosmetic surgery and $V_K(t)$ is nontrivial, then $\operatorname{span} V_K(t) \ge 15$. 
This completes the proof of Theorem~\ref{thm:main}.

\begin{remark}
The proof above actually shows that $\operatorname{span} V_K(t) \ge 16$ in the case $r \le 0 \le s$. 
Thus, if a knot $K$ admitting purely cosmetic surgery satisfies $\operatorname{span} V_K(t) = 15$, then the degrees of the non-zero terms of $V_K(t)$ are either all positive or all negative. 
\end{remark}

\section{The boundary case of span 15}\label{sec:span15}

We now describe completely the Laurent polynomials of span $15$ that satisfy the divisibility condition appearing in the proof of Theorem~\ref{thm:main}.
Retain the notation
\[
-\frac{1}{Q(t)}=\sum_{n\ge 0} a_n t^n,
\qquad a_n>0,
\]
from the preceding section.  Thus
\[
a_0,a_1,a_2,a_3,a_4,a_5,\ldots=1,3,5,7,10,15,\ldots .
\]
For each integer $r\ge 1$, define
\[
G_r(t):=1+Q(t)\sum_{j=0}^{r-1}a_jt^j.
\]
In particular, since $a_0=1$, we have $G_1(t)=1+Q(t)$.

Using $Q(t)\sum_{n\ge0}a_nt^n=-1$, we have
\[
\begin{aligned}
G_r(t)
&=1+Q(t)\sum_{j=0}^{r-1}a_jt^j\\
&=Q(t)\left(\sum_{j=0}^{r-1}a_jt^j
             -\sum_{n\ge0}a_nt^n\right)\\
&=-Q(t)\sum_{n\ge r}a_nt^n
\end{aligned}
\]
as an identity of formal power series.  Since $Q(0)=-1$, the lowest-degree term of $G_r$ is $a_rt^r$, which is non-zero.  On the other hand, from the defining finite sum and the fact that the leading coefficient of $Q$ is $1$, the highest-degree term of $G_r$ is $a_{r-1}t^{r+15}$, which is also non-zero.  Consequently
\[
\operatorname{span} G_r(t)=(r+15)-r=15,
\]
and clearly $Q\mid(G_r-1)$.  The same span and divisibility conditions hold for $G_r(t^{-1})$, by the identity $t^{16}Q(t^{-1})=-Q(t)$.
Conversely, for any $f(t)\in\Z[t^{\pm1}]$, the divisibility $Q(t)\mid(f(t)-1)$ implies the conditions (A1)--(A5) and (E2)--(E6) for $f(t)$, since $(t-1)^5\mid(f(t)-1)$ and $\Phi_m(t)\mid(f(t)-1)$ for $m=2,\dots,6$.
In particular, $G_r(t)$ and $G_r(t^{-1})$ satisfy all these conditions for every $r\ge1$.

\begin{proposition}\label{prop:span15-PCScandidate}
Suppose that a knot $K\subset S^3$ admits purely cosmetic surgery and
$\operatorname{span} V_K(t)=15$.
Then there exists a unique integer $r\ge1$ such that
\[
V_K(t)=G_r(t)\qquad\text{or}\qquad V_K(t)=G_r(t^{-1}).
\]
\end{proposition}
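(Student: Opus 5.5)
We follow the proof of Theorem~\ref{thm:main}, keeping track of when its inequalities are equalities. By Proposition~\ref{prop:conditions} and the divisibility argument of Section~2, $V_K(t)=1+Q(t)h(t)$ for some $h(t)\in\Z[t^{\pm1}]$. Since $\operatorname{span}V_K(t)=15$, we have $V_K(t)\neq 1$. Let $r$ and $s$ be the lowest and highest degrees of $V_K(t)$. By the Remark following the proof of Theorem~\ref{thm:main}, the case $r\le 0\le s$ forces span at least $16$, so it is excluded. Hence either all degrees are positive, or all are negative. In the negative case we replace $V_K(t)$ by $V_K(t^{-1})$. By the identity $t^{16}Q(t^{-1})=-Q(t)$, $Q$ still divides $V_K(t^{-1})-1$, and the span is unchanged. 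So it suffices to treat $r\ge1$ and prove $V_K=G_r$.

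\textbf{Case $r\ge1$.} As in Section~2, $h(t)\in\Z[t]$. The formal identity
\[
h(t)=\sum_{n\ge0}a_nt^n - V_K(t)\sum_{n\ge0}a_nt^n
\]
shows that the coefficient of $t^n$ in $h$ equals $a_n$ for $0\le n<r$. The proof of Theorem~\ref{thm:main} gives $15=N=16+\deg h-r$, so $\deg h=r-1$ exactly. Therefore $h(t)$ is determined completely: $h(t)=\sum_{j=0}^{r-1}a_jt^j$. This gives $V_K=G_r$. The key point is that the lower bound $\deg h\ge r-1$ coming from the low coefficients and the span equality combine to pin down all coefficients of $h$. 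I expect this to be the main step, but it is short once $\deg h = r-1$ is noted.

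\textbf{Uniqueness.} The lowest-degree term of $G_r(t)$ is $a_rt^r$, computed just before the statement, so $G_r$ has minimal degree $r\ge1$. Symmetrically, $G_r(t^{-1})$ has maximal degree $-r\le -1$ and only negative-degree terms. Hence the $G_r(t)$ for different $r$ are distinct, since their minimal degrees differ. The $G_r(t^{-1})$ are likewise distinct from one another. Finally, no $G_r(t)$ equals any $G_{r'}(t^{-1})$, because the first has only positive degrees and the second only negative ones. Thus the integer $r$, and which of the two alternatives holds, are uniquely determined by $V_K(t)$.
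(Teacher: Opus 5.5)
Your proposal is correct and follows the paper's proof essentially step for step. Both exclude the case $r\le 0\le s$, reduce the negative-degree case via $t^{16}Q(t^{-1})=-Q(t)$, use the formal power series identity to fix the coefficients of $h$ below degree $r$, and compare top degrees to force $\deg h=r-1$. Your uniqueness argument via minimal and maximal degrees also matches the paper's, and is slightly more explicit in ruling out $G_r(t)=G_{r'}(t^{-1})$.
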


\begin{proof}
As shown in the proof of Theorem~\ref{thm:main}, the conditions
(A1)--(A5) and (E2)--(E6) imply $Q(t)\mid(V_K(t)-1)$.
Moreover, $V_K(t)\ne1$ since $\operatorname{span} V_K(t)=15$.
We can therefore write
\[
V_K(t)=1+Q(t)h(t),\qquad h(t)\in\Z[t^{\pm1}].
\]
As observed in the proof of Theorem~\ref{thm:main}, if $r \le 0 \le s$, where $r$ (resp. $s$) is the lowest (resp. highest) degree of a non-zero term of $V_K(t)$, then
\[
\operatorname{span} V_K(t)\ge16.
\]
Hence, under the assumption $\operatorname{span} V_K(t)=15$, the degrees of the non-zero terms of $V_K(t)$ are either all positive or all negative.

Suppose first that all non-zero terms have positive degrees.  In this case, we have $r\ge1$.
Since $\operatorname{span} V_K(t)=15$, the highest degree is $r+15$.
As in the proof of Theorem~\ref{thm:main}, we have $h(t)\in\Z[t]$.
Multiplying $V_K(t)=1+Q(t)h(t)$ by
$-1/Q(t)=\sum_{n\ge0}a_nt^n$ and rearranging, we obtain
\[
h(t)=\sum_{n\ge0}a_nt^n
      -V_K(t)\sum_{n\ge0}a_nt^n
\]
as an identity of formal power series.
Since $V_K(t)$ has no terms of degree less than $r$ and the
series $\sum_{n\ge0}a_nt^n$ has only non-negative powers of $t$,
the product on the right has no terms of degree less than $r$.
Thus the coefficient of $t^n$ in $h(t)$ is $a_n$ for $0\le n<r$.
Moreover, comparison of the highest degrees in $V_K(t)=1+Q(t)h(t)$ gives
\[
r+15=16+\deg h,
\]
and therefore
\[
\deg h=r-1.
\]
Thus $h(t)$ has no terms of degree greater than $r-1$.
Together with the coefficient comparison above, this gives
\[
h(t)=\sum_{j=0}^{r-1}a_jt^j.
\]
Thus $V_K(t)=G_r(t)$.

If all non-zero terms of $V_K(t)$ have negative degrees, then $V_K(t^{-1})$ has only positive-degree terms and still satisfies
\[
Q(t)\mid(V_K(t^{-1})-1),\qquad \operatorname{span} V_K(t^{-1})=15,
\]
by the identity $t^{16}Q(t^{-1})=-Q(t)$.  The lowest degree of a non-zero term of $V_K(t^{-1})$ is $-s\ge1$.
Applying the preceding case to $V_K(t^{-1})$, we obtain
$V_K(t)=G_{-s}(t^{-1})$.

Finally, for each $r\ge1$, the lowest degree of a non-zero term of
$G_r(t)$ is $r$, and the highest degree of a non-zero term of
$G_r(t^{-1})$ is $-r$.
Thus the integer in the statement is uniquely determined by $V_K(t)$,
since all its non-zero terms have either positive or negative degrees.
\end{proof}

The same argument applies to any Laurent polynomial
$f(t)\in\Z[t^{\pm1}]$ with $Q(t)\mid(f(t)-1)$ and
$\operatorname{span} f(t)=15$.
Thus every such polynomial is $G_r(t)$ or $G_r(t^{-1})$
for a unique integer $r\ge1$.

\begin{remark}\label{rem:census19}
Using Burton's Regina census \cite{BurtonKnots}, we performed a computer-assisted search for prime knots through $19$ crossings.
No prime knot in this range has Jones polynomial
\[
V_K(t)=G_r(t)\qquad\text{or}\qquad V_K(t)=G_r(t^{-1})
\]
for any $r\ge1$.
Note that the purely cosmetic surgery conjecture itself has been verified for all knots with at most $19$ crossings by Futer, Purcell and Schleimer \cite[Theorem 2.11]{FPS}.
The search above concerns a different question, namely, whether the polynomials $G_r(t)$ and $G_r(t^{-1})$ are realized as Jones polynomials of knots, regardless of purely cosmetic surgery.
\end{remark}

The calculations above show that the conditions
(A1)--(A5) and (E2)--(E6) do not exclude span $15$.
Indeed, the polynomials $G_r(t)$ and $G_r(t^{-1})$, $r\ge1$,
form an infinite family satisfying all these conditions.
Whether any of these polynomials is realized as the Jones polynomial
of a knot is a separate question.

\section{A braid index restriction}\label{sec:braid-index}

In this section, we prove Proposition~\ref{prop:braid-index}.
It shows that, for knots of braid index at most $4$, the oddness of the span of the Jones polynomial gives an obstruction to purely cosmetic surgery.
In particular, since the polynomials $G_r(t)$ and $G_r(t^{-1})$ given in Section~\ref{sec:span15} have span $15$, a knot admitting purely cosmetic surgery whose Jones polynomial is one of them must have braid index at least $5$.

    
\begin{proof}[Proof of Proposition~\ref{prop:braid-index}]
Suppose, to the contrary, that $K$ admits purely cosmetic surgery.
Note that, since $\operatorname{span} V_K(t)$ is odd, we have $V_K(t)\ne1$, and so $K$ is nontrivial.
Thus, by Theorem~\ref{thm:main}, we have $\operatorname{span} V_K(t)\ge15$.
Also, by Daemi--Lidman--Miller Eismeier \cite[Corollary 1.4]{DLME}, a nontrivial knot admitting purely cosmetic surgery has
\[
g(K)=2,\qquad \Delta_K(t)=1.
\]

Since $K$ is nontrivial, if $br(K)\le2$, then $K$ is a $(2,p)$-torus knot, and Moser's classification of surgeries on torus knots implies that $K$ admits no purely cosmetic surgery \cite{Moser}.
By Varvarezos \cite{Varvarezos}, a nontrivial knot which is the closure of a $3$-braid does not admit purely cosmetic surgery.
Hence we have $br(K)=4$, that is, $K$ is the closure of a $4$-braid $\beta$.
Let $e=e(\beta)$ be its exponent sum.

Regard the closure of $\beta$ as a transverse knot in the standard tight contact structure on $S^3$.
Its self-linking number is
\[
\operatorname{sl}(\widehat{\beta})=e-4.
\]
By Bennequin's inequality \cite{Bennequin}, applied using a minimal-genus Seifert surface for $K$, we have
\[
e-4=\operatorname{sl}(\widehat{\beta})
\le 2g(K)-1=3.
\]
Let $\beta^*$ be the $4$-braid obtained from $\beta$ by replacing each generator $\sigma_i$ by $\sigma_i^{-1}$.
Then $\widehat{\beta^*}$ is the mirror image of $K$ and $e(\beta^*)=-e$.
Since the Seifert genus is invariant under taking mirror images,
\[
g(\widehat{\beta^*})=g(K)=2.
\]
Applying Bennequin's inequality to the transverse closure of $\beta^*$ gives
\[
-e-4=\operatorname{sl}(\widehat{\beta^*})
\le 2g(\widehat{\beta^*})-1=3.
\]
Consequently,
\[
|e|\le7.
\]

For a knot which is the closure of a $4$-braid of exponent sum $e$, the Jones--Alexander relation for closed $4$-braids (\cite[Theorem 5]{Stoimenow4braid}, \cite[Proposition 11.11]{Jones87}) gives 
\begin{equation}\label{eq:four-braid-relation}
\begin{aligned}
&t^{-e}V_K(t)+t^eV_K(t^{-1})\\
&\quad=
\left(t^{-3/2}+t^{-1/2}+t^{1/2}+t^{3/2}\right)
\left(t^{-e/2}+t^{e/2}\right)
-
\left(t^{-2}+t^{-1}+2+t+t^2\right)\Delta_K(t).
\end{aligned}
\end{equation}
Since $\Delta_K(t)=1$ and $|e|\le7$, the degrees occurring in the first product on the right-hand side of \eqref{eq:four-braid-relation} lie in $[-5,5]$, while those occurring in the second term lie in $[-2,2]$.
Hence all non-zero terms on the right-hand side have degrees in $[-5,5]$.

Let $r$ (resp. $s$) be the lowest (resp. highest) degree of a non-zero term of $V_K(t)$, and set $N:=s-r=\operatorname{span} V_K(t)$.
Then the lowest and the highest degrees of the non-zero terms of the two summands on the left-hand side of \eqref{eq:four-braid-relation} are
\[
r-e,\ s-e
\qquad\text{and}\qquad
e-s,\ e-r,
\]
respectively.
In particular, the highest degrees of the two summands are $s-e$ and $e-r$, whose sum is $N$.  Thus
\[
M:=\max\{s-e,\,e-r\}\ge \frac{N}{2}\ge\frac{15}{2}.
\]
Since $M$ is an integer, we have $M\ge8$.
Moreover, the two highest degrees cannot be equal, since their sum $N$ is odd.
Hence the term of degree $M$ on the left-hand side comes from exactly one of the two summands and has a non-zero coefficient.
This contradicts the fact that all non-zero terms on the right-hand side have degrees in $[-5,5]$.
Therefore $K$ admits no purely cosmetic surgery.
\end{proof}

\section*{Statement on the use of AI}
AI tools (Claude, ChatGPT) were used for checking computations and arguments and for improving the exposition. 
The authors take full final responsibility for the content of this paper.

\bibliographystyle{amsplain}
\bibliography{PCSjones}

\end{document}